     \def\section{\@startsection{section}{1}%
     \z@{.7\linespacing\@plus\linespacing}{.5\linespacing}%
     {\bfseries
     \centering
     }}
     \def\@secnumfont{\bfseries}
\newtheorem{theorem}{Theorem}[section]
\newtheorem{lemma}[theorem]{Lemma}
\newtheorem{proposition}[theorem]{Proposition}
\theoremstyle{definition}
\theoremstyle{remark}
\newtheorem{remark}[theorem]{Remark}
\numberwithin{equation}{section}
\begin{document}
\title[CONVERGENCE TO WEIGHTED FRACTIONAL BROWNIAN SHEETS]{CONVERGENCE TO WEIGHTED FRACTIONAL BROWNIAN SHEETS*}
\author{JOHANNA GARZ\'ON }
\thanks{* Research partially supported by CONACYT grant 45684-F}
\address{Department of Mathematics, CINVESTAV, Mexico City, Mexico}
\email{johanna@math.cinvestav.mx}

\subjclass[2000]{ Primary 60G60; Secondary 60G15, 60F05.}
\keywords{ Fractional Brownian sheet, weighted fractional Brownian sheet, approximation in law, long-range dependence. }

\begin{abstract}We define weighted fractional Brownian sheets, which are a class of Gaussian random fields with four parameters that include fractional Brownian sheets as special cases, and we give some of their properties. We show that for certain values of the parameters the weighted fractional Brownian sheets are obtained as limits in law of occupation time fluctuations of a stochastic particle model. In contrast with some known approximations of fractional Brownian sheets which use a kernel in a Volterra type integral representation of fractional Brownian motion with respect to ordinary Brownian motion, our approximation does not make use of a kernel.
\end{abstract}

\maketitle
\section{Introduction}

\allowdisplaybreaks

Fractional Brownian sheets have been studied by several authors for their mathematical interest and their applications. One of the first papers on the subject is \cite{kamont}. Some types of approximations of fractional Brownian sheets have been obtained recently (e.g. \cite{bardina}, \cite{bardinaflorit}, \cite{coutin}, \cite{kuhn}, \cite{nzi}, \cite{tudor}). In this paper we give a new type of approximation for certain values of the parameters by means of occupation time fluctuations of a stochastic particle model. The  limits that are obtained in this way   are a more general class of Gaussian random fields.

We consider centered Gaussian random fields $W=\left(W_{s,t}\right)_{s,t \geq 0}$ with parameters $(a_i, b_i)$, $i=1, 2$, whose covariance is given by
\begin{equation}
\label{eq1}
K_W((s, t), (s', t'))= E\left(W_{s,t}W_{s',t'}\right)= C^{(1)}(s, s')C^{(2)}(t,t'), 
\end{equation}
where each  $C^{(i)}$ is of the form
\begin{equation}
\label{equ2}
C^{(i)}(u,v)= \int_{0}^{u\wedge v} r^{a_i}[(u-r)^{b_i} + (v-r)^{b_i}]dr,   \  \  \  \  i=1,2,
\end{equation}
with the following ranges for the parameters:
\begin{equation}
\label{equ2'}
a_i> -1,\  -1<b_i \leq 1, \ \left|b_i\right|\leq 1+a_i.
\end{equation}
$C^{(i)}$ is the covariance of weighted fractional Brownian motion with parameters $(a_i, b_i)$. Weighted fractional Brownian motions were introduced in \cite{bgt2003}. We call $W$ a weighted fractional Brownian sheet with parameters $(a_i, b_i)$, $i=1,2.$  In the case $a_1=a_2=0$ (the weight functions are 1) $W$ is a fractional Brownian sheet  with parameters $(\frac{1}{2}(1+b_1), \frac{1}{2}(1+b_2))$. The case $a_1=a_2=b_1=b_2=0$ corresponds to the ordinary Brownian sheet. If $b_1=b_2=0,$ and at least one $a_i$ is not $0$, then $W$ is a time-inhomogeneous  Brownian sheet.

Due to the covariance structure (\ref{eq1}), (\ref{equ2}), many properties of $W$ are consequences of those of weighted fractional Brownian motion. We will prove an approximation in law  of $W$ for $a_i $ and $b_i$ of the form
$a_i=-\gamma_i/\alpha_i, b_i=1-1/\alpha_i$, with $0\leq \gamma_i < 1$ and $1<\alpha_i \leq 2$; hence the approximation is restricted to values of $a_i$ and $b_i$ such that $-1 < a_i \leq 0$ and $0 < b_i < 1+ a_i$, $i=1, 2.$ 

The approximations of fractional Brownian sheets in \cite{bardina},  \cite{tudor} are based on a Poisson random measure on $\mathbb{R}_{+}\times\mathbb{R}_+$ and a kernel which appears in a Volterra type integral representation of fractional Brownian motion with respect to ordinary Brownian motion. The approximation in \cite{bardinaflorit}, analogous to the functional invariance theorem, also uses the kernel. Our approach does not use a kernel. We also use a Poisson random measure, but on $\mathbb{R}\times\mathbb{R}$ instead of $\mathbb{R}_{+}\times\mathbb{R}_+$ and in a different way from \cite{bardina},  \cite{tudor}. Some of the other approximations cited above are motived by simulation of fractional Brownian sheets. Our approximation is not intended for simulation, but rather to show that weighted fractional Brownian sheets emerge in a natural way from a simple particle model.

In section 2 we give the properties of $W$, in particular long-range dependence. In section 3 we describe the particle system and we prove convergence to $W$ of rescaled ocupation time fluctuations of the system  for the above mentioned values of the parameters.

\section{Properties}

We consider $\mathbb{R}^2_+$  with the following partial order: for
 $z=(s,t)$ and $ z'=(s', t')$, \ 
$ z \preceq z'$  iff $  s\leq s'$ and $ t \leq t',$
$ z \prec z'$ iff $ s< s'$ and $t < t'$, and if $z \prec z'$ we denote by $(z, z']$ the rectangle $(s,s']\times (t,t']$. We refer to elements of $\mathbb{R}_+^2$ as ``times"  for simplicity of exposition.

If $X=\left(X_z\right)_{ z\in \mathbb{R}_+^2}$ is a two-time stochastic process, the increment of $X$ over the rectangle $(z,z']$ with $z=(s,t), z'=(s',t')$ is defined by
 $$X((z,z'])\equiv\Delta_{s,t}X(s',t'):= X_{(s',t')}-X_{(s,t')}-X_{(s',t)}+X_{(s,t)}.$$

 We denote the covariance of the increments of the process $X$ over the rectangles $((s,t),(s',t')]$, $((p, r),(p', r')]$
 by
 $$K_X\left((s,t), (s',t'); (p, r), (p', r') \right)=Cov\left(\Delta_{s,t}X(s',t'),\Delta_{p,r}X(p',r')\right).$$
The covariance of $W$ over rectangles is given by
\begin{align}
&K_W((s,t), (s',t'); (p, r), (p', r'))\notag\\
&=(C^{(1)}(s',p') - C^{(1)}(s,p') - C^{(1)}(s',p) + C^{(1)}(s,p))\notag\\
&\times (C^{(2)}(t',r') - C^{(2)}(t,r') - C^{(2)}(t',r) +C^{(2)}(t,r))\notag\\
&= Cov(Y_{s'}^{(1)}-Y_{s}^{(1)}, Y_{p'}^{(1)}-Y_{p}^{(1)})Cov(Y_{t'}^{(2)}-Y_{t}^{(2)}, Y_{r'}^{(2)}-Y_{r}^{(2)}),
\end{align}
where  $Y^{(i)}$ is  weighted fractional Brownian motion with parameters $(a_i, b_i)$, $i=1,2.$ 

The next theorem contains some properties of weighted fractional Brownian sheets.
\begin{theorem}
 The weighted fractional Brownian sheet $W$ with parameters $(a_i, b_i)$, $i=1,2,$ has the following properties:
\begin{enumerate}
	\item Self-similarity:
	\begin{equation}
\label{eqautosimilar}
\left(W_{hs, kt}\right)_{s,t \geq 0}\overset{\text{d}}{=} h^{(1+a_1+b_1)/2}k^{(1+a_2+b_2)/2}(W_{s,t})_{s,t \geq 0} \  \  \text{for each} \  \  h,k > 0,
\end{equation}
where $\overset{\text{d}}{=}$ denotes equality in distribution.
\item $W$ has stationary increments only in the case $a_1=a_2=0$.
\item Covariance of increments: For $(0,0)\preceq (s,t) \prec (s',t') \preceq (p, r) \prec (p', r')$,
\begin{align}
\label{eq2}
&K_W\left((s,t), (s',t'); (p, r), (p', r') \right)\notag\\
&=  \int_{s}^{s'} u^{a_1}[(p'-u)^{b_1} + (p-u)^{b_1}]du   \int_{t}^{t'} v^{a_2}[(r'-v)^{b_2} + (r-v)^{b_2}]dv,
\end{align}
hence
\[K_W\left((s,t), (s',t'); (p, r), (p', r') \right)
\begin{cases}
>0 & \text{if} \  \   b_1b_2 > 0,	\\
=0 & \text{if} \  \   b_1b_2 = 0,\\
<0 & \text{if} \  \   b_1b_2 < 0.
\end{cases}\]
\item The one-time processes $(W_{s,t})_{s\geq 0}$ ($t$ fixed) and $(W_{s, t})_{t\geq 0}$ ($s$ fixed) are weighted fractional Brownian motions (multiplied by constants) with parameters $(a_1, b_1)$ and $(a_2, b_2)$, respectively.
\item
\begin{equation}
\label{eqesperanza}
E(\left(\Delta_{s,t}W_{s',t'}\right)^2)= 4\int_s^{s'}u^{a_1}(s'-u)^{b_1}du\int_t^{t'}v^{a_2}(t'-v)^{b_2}dv.
\end{equation}
\item 
\begin{equation}
\lim_{\varepsilon, \delta \to 0}\varepsilon^{-b_1-1}\delta^{-b_2-1}E(\left(\Delta_{s,t}W_{s+\varepsilon, t+\delta}\right)^2)= \frac{4}{(1+b_1)(1+b_2)}s^{a_1}t^{a_2},
\end{equation}
\begin{align}
&\lim_{T, S \to \infty}S^{-(1+a_1+b_1)}T^{-(1+a_2+b_2)}E(\left(\Delta_{s,t}W_{s+S, t+T}\right)^2)\notag\\
&=4 \int_0^1u^{a_1}(1-u)^{b_1}du \int_0^1v^{a_2}(1-v)^{b_2}dv,
\end{align}
hence $W$ has asymptotically stationary increments for long increments in $\mathbb{R}_+^2$, but not for short ones (if $a_1,a_2\neq 0$).
\item The finite-dimensional distributions of the process 
$$(S^{-a_1/2}T^{-a_2/2}\Delta_{S,T}W_{s+S,t+T})_{s,t\geq 0}$$
 converge as $T, S \to \infty$ to those of fractional Brownian sheet with parameters $(\frac{1}{2}(1+b_1), \frac{1}{2}(1+b_2))$ multiplied by $2/[(1+b_1)(1+b_2)]^{1/2}$.
\item Long-range dependence: for $(s,t)\prec (s',t'),$ $(p,u)\prec(p',u')$
\begin{align}
&\lim_{\tau, \kappa \to \infty}\tau^{1-b_1}\kappa^{1-b_2}K_W\left((s,t), (s',t'); (p+\tau , u+ \kappa), (p'+\tau, u'+\kappa) \right)\notag\\
&=\frac{b_1b_2}{(1+a_1)(1+a_2)}(p'-p)((s')^{1+a_1}-s^{1+a_1})(u'-u)((t')^{1+a_2}- t^{1+a_2}).
\end{align}
\item For $\theta>0$, we define the one-time process $\left(Z_t\right)_{t\geq 0 }=\left(W_{t, \theta t}\right)_{t\geq 0 }$, i.e., the sheet restricted to a ray through the origen. (Note that $Z$ is not a weighted fractional Brownian motion.) Then for $0\leq b_i < 1$, $i=1,2$, and not both $b_1, b_2$ equal to $0$,  this process has the long-range dependence property
\begin{align}
&\lim_{\tau \to \infty}\tau^{1-(b_1+b_2)}Cov(Z_v - Z_u, Z_{t+\tau}-Z_{s+\tau})\notag\\
&=\frac{\theta^{1+a_2+b_2}(b_1+b_2)}{(1+a_1)(1+a_2)}(v^{2+a_1+a_2}-u^{2+a_1+a_2})(t-s), \  \ \ u<v, s<t.	
\end{align}
\end{enumerate}
\end{theorem}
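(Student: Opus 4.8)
\emph{Overall strategy.} The plan is to lean on two structural facts throughout. First, $W$ is centered Gaussian, so every distributional identity or convergence in \eqref{eqautosimilar}, (7), etc.\ reduces to the corresponding statement about covariances. Second, the covariance factorizes, $K_W=C^{(1)}\otimes C^{(2)}$, with each $C^{(i)}$ the covariance of a weighted fractional Brownian motion $Y^{(i)}$. Two elementary computations will serve as the engine. The first is the scaling relation $C^{(i)}(hu,hv)=h^{1+a_i+b_i}C^{(i)}(u,v)$, obtained by the substitution $r=h\rho$ in \eqref{equ2}. The second is a master increment formula: for $0\le u\le v\le p\le q$, splitting the defining integral at the minima and cancelling the common pieces gives
\[
Cov\bigl(Y^{(i)}_v-Y^{(i)}_u,\;Y^{(i)}_q-Y^{(i)}_p\bigr)=\int_u^v r^{a_i}\bigl[(q-r)^{b_i}-(p-r)^{b_i}\bigr]\,dr,
\]
together with the variance version $E\bigl((Y^{(i)}_v-Y^{(i)}_u)^2\bigr)=C^{(i)}(v,v)-2C^{(i)}(u,v)+C^{(i)}(u,u)=2\int_u^v r^{a_i}(v-r)^{b_i}\,dr$. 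Every item is a corollary of these two computations and the product structure.

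\emph{Items (1)--(4).} Self-similarity follows by comparing covariances: by the scaling lemma applied to each factor, $K_W((hs,kt),(hs',kt'))=h^{1+a_1+b_1}k^{1+a_2+b_2}K_W((s,t),(s',t'))$, which is exactly the covariance of the right-hand side of \eqref{eqautosimilar}. For (3), the factorized increment covariance over ordered rectangles is the product of two copies of the master formula, and the sign trichotomy is read off the sign of $(q-r)^{b_i}-(p-r)^{b_i}$, which equals $\mathrm{sgn}(b_i)$. For (4), fixing $t$ makes the covariance of $(W_{s,t})_s$ equal to $C^{(2)}(t,t)\,C^{(1)}(s,s')$, a constant multiple of $C^{(1)}$, so this one-time process is $\sqrt{C^{(2)}(t,t)}$ times a weighted fractional Brownian motion with parameters $(a_1,b_1)$. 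For (2), by \eqref{eqesperanza} the increment variance is $4\,I_1(s,s')\,I_2(t,t')$ with $I_i(s,s')=\int_s^{s'}u^{a_i}(s'-u)^{b_i}\,du$; stationarity of increments forces $I_i(s+h,s'+h)=I_i(s,s')$ for all $h$, which holds iff $a_i=0$, while $a_1=a_2=0$ gives full translation invariance of all increment covariances directly from the master formula.

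\emph{Items (5)--(7).} Identity \eqref{eqesperanza} is simply the product of the two variance formulas above. The two limits in (6) come from \eqref{eqesperanza} by dominated convergence: for short increments $u^{a_i}\to s^{a_i}$ on the shrinking interval, leaving $\int(s'-u)^{b_i}\,du=\varepsilon^{1+b_i}/(1+b_i)$; for long increments the substitution $u=(s+S)w$ produces the Beta integral $\int_0^1 w^{a_i}(1-w)^{b_i}\,dw$. For (7) I again reduce to covariances: $S^{-a_1}T^{-a_2}$ times the covariance of the two rescaled increments factorizes, and the single-coordinate factor converges, via the polarization identity and the long-increment asymptotics, to $\tfrac{2}{1+b_i}$ times the fractional Brownian covariance with Hurst index $\tfrac12(1+b_i)$; the product yields the fractional Brownian sheet covariance with constant $4/[(1+b_1)(1+b_2)]$, i.e.\ the stated multiplier squared.

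\emph{Items (8)--(9).} For (8) I insert the shift into the master formula and Taylor-expand, $(p'+\tau-u)^{b_i}-(p+\tau-u)^{b_i}=b_i(p'-p)\tau^{b_i-1}(1+o(1))$ uniformly on the compact $u$-range, so that $\tau^{1-b_1}\kappa^{1-b_2}K_W$ converges to the stated product after integrating $u^{a_i}$. Item (9) is the delicate one and the main obstacle, because $Z_t=W_{t,\theta t}$ is a diagonal restriction: its increments are point differences, not rectangle increments, so the covariance does not factorize into increment covariances. Using $E(Z_xZ_y)=\theta^{1+a_2+b_2}C^{(1)}(x,y)C^{(2)}(x,y)$ (scaling lemma on the second factor), the target is the mixed second difference of the product $D=C^{(1)}C^{(2)}$ in the variables $x\in\{u,v\}$ and $y\in\{s+\tau,t+\tau\}$. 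I would expand each $C^{(i)}(x,\cdot+\tau)$ for large $\tau$ as $y^{b_i}x^{1+a_i}/(1+a_i)$ plus lower-order terms, multiply the two expansions, and show that after the mixed difference and multiplication by $\tau^{1-(b_1+b_2)}$ only the product of leading terms survives, since every cross term carries a strictly smaller power of $\tau$ precisely because $b_1,b_2>0$. The careful bookkeeping of these subleading contributions, and the dominated-convergence justification of the term-by-term expansion, is where the real work lies; the leading term reproduces $\theta^{1+a_2+b_2}(b_1+b_2)(v^{2+a_1+a_2}-u^{2+a_1+a_2})(t-s)/[(1+a_1)(1+a_2)]$.
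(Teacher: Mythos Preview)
Your treatment of items (1)--(8) is correct and is exactly what the paper does: it simply says these follow from the product form of $K_W$ and the known properties of weighted fractional Brownian motion, so your added detail is a faithful expansion of that one-line proof.

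For item (9) your strategy is sound but organized differently from the paper. You propose to expand each factor $C^{(i)}(x,y)$ for large $y$ as $y^{b_i}x^{1+a_i}/(1+a_i)$ plus lower-order terms, multiply, and then argue that after the mixed second difference and the rescaling only the product of the leading terms survives; as you note, this requires careful bookkeeping of all subleading cross terms (including the $y$-independent piece $\int_0^x r^{a_i}(x-r)^{b_i}\,dr$, which is why you need $b_i>0$). The paper avoids this bookkeeping entirely by first applying the algebraic identity
\[
C^{(1)}(v,t{+}\tau)C^{(2)}(v,t{+}\tau)-C^{(1)}(v,s{+}\tau)C^{(2)}(v,s{+}\tau)
=\bigl[C^{(1)}(v,t{+}\tau)-C^{(1)}(v,s{+}\tau)\bigr]C^{(2)}(v,t{+}\tau)
+\bigl[C^{(2)}(v,t{+}\tau)-C^{(2)}(v,s{+}\tau)\bigr]C^{(1)}(v,s{+}\tau),
\]
(and similarly with $u$ in place of $v$), which reduces everything to the two elementary limits
$\lim_{\tau\to\infty}\tau^{1-b}\bigl[(t_2+\tau)^b-(t_1+\tau)^b\bigr]=b(t_2-t_1)$ and
$\lim_{\tau\to\infty}\tau^{-b}\int_0^v r^{a}\bigl[(t-r+\tau)^{b}+(v-r)^{b}\bigr]\,dr=v^{1+a}/(1+a)$.
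This product-rule decomposition is what makes the ``real work'' you anticipate disappear: no expansion with remainder is needed, and each piece is already a product of a difference in one factor times a single covariance in the other. Your route gets to the same answer, but the paper's is shorter and cleaner.
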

\begin{proof}
 Except for part (9), the proofs follow directly from the form of $K_W$ given by (\ref{eq1}), (\ref{equ2}) and  properties of weighted fractional Brownian motion  \cite{bgt2003}. We give an outline of the proof of part (9).
 
 We have, for $u<v$, $s<t$,
 \begin{align}
 \label{e30}
&Cov(Z_v-Z_u, Z_{t+\tau}-Z_{s+\tau})\notag\\
&=\theta^{1+a_2+b_2}[C^{(1)}(v,t+\tau)C^{(2)}(v,t+\tau) - C^{(1)}(v,s+\tau)C^{(2)}(v,s+\tau)\notag\\
&\hspace{1.7cm} -C^{(1)}(u,t+\tau)C^{(2)}(u,t+\tau) + C^{(2)}(u,s+\tau)C^{(2)}(u,s+\tau)],
\end{align}
 \begin{align}
  \label{e31}
&C^{(1)}(v,t+\tau)C^{(2)}(v,t+\tau) - C^{(1)}(v,s+\tau)C^{(2)}(v,s+\tau)\notag\\
&=[C^{(1)}(v,t+\tau)- C^{(1)}(v,s+\tau)]C^{(2)}(v,t+\tau)\notag\\
&+[C^{(2)}(v,t+\tau)- C^{(2)}(v,s+\tau)]C^{(1)}(v,s+\tau)\notag\\
&=\int_0^vr^{a_1}[(t-r+\tau)^{b_1}-(s-r+\tau)^{b_1}]dr\int_0^vr^{a_2}[(t-r+\tau)^{b_2}+(v-r)^{b_2}]dr\notag\\
&+\int_0^vr^{a_2}[(t-r+\tau)^{b_2}-(s-r+\tau)^{b_2}]dr\int_0^vr^{a_1}[(s-r+\tau)^{b_1}+(v-r)^{b_1}]dr,
\end{align}
and similarly for the last two terms. The result follows from (\ref{e30}), (\ref{e31}) and the limits
\begin{equation*}
	\lim_{\tau \to \infty}\tau^{1-b}[(t_2+\tau)^b-(t_1+\tau)^b]=b(t_2-t_1)
\end{equation*}
and
\begin{equation*}
		\lim_{\tau \to \infty}\tau^{-b}\int_0^vr^{a}[(t+\tau)^{b}+(v-r)^{b}]dr=\frac{v^{1+a}}{1+a}.
\end{equation*}
\end{proof}

\begin{remark}
There are three different long-range dependence regimes in property (9), and they are independent of $a_1, a_2$. The covariance of increments of $Z$ has a power decay for $b_1+b_2<1$, a power growth for $b_1+b_2>1$, and a non-trivial limit for $b_1+b_2=1$. We do not know if this property has been noted before for fractional Brownian sheets. It is worthwhile to observe that the non-Gaussian process $(Y_t^{(1)}Y_{\theta t}^{(2)})_{t\geq 0}$, where $Y^{(i)}$ are independent weighted fractional Brownian motions with parameters $(a_i, b_i),$ $i=1,2$, has the same long-range dependence behavior. 
\end{remark}

In \cite{bgt2003} it is  shown that $A_{s,t}= \int_s^t u^a(t-u)^b du$,  $0 \leq s < t$, has the following bounds:
If $a\geq 0,$ $s,t \leq T$ for any $T>0$ and constant $M=M(T)$, and also if $a< 0,$  $s,t\geq \varepsilon $ for any $\varepsilon>0$ and constant $M=M(\varepsilon)$, 
	$$A_{s,t} \leq M\left|t-s\right|^{1+b}.$$
 If $a<0$, $1+a+b>0$, $s, t\geq 0$,
	$$A_{s,t} \leq M\left|t-s\right|^{1+a+b}.$$
Then it follows from (\ref{eqesperanza}) that for $0 < \varepsilon \leq s < s' <T,$ $0 < \varepsilon \leq t < t' <T$ and $i=1,2,$
\begin{equation}
\label{eq26}
E(\left(\Delta_{s,t}W_{s',t'}\right)^2)\leq M\left(s'-s\right)^{\delta_1}\left(t'-t\right)^{\delta_2},	
\end{equation}
where
\begin{equation}
\label{eqdeltas}
\delta_i=
\Biggl\{\begin{array}{ll}
1+a_i+b_i & \text{\ \ \ if\ \ \ } a_i< 0 \text{\ \ \ \ and\ \ \ } 1+a_i+b_i>0,\\
1+b_i & \text{\ \ \ otherwise.}
\end{array}	\Biggr.
\end{equation}

The next lemma  allows us to prove the continuity of $W$.  	

\begin{lemma}
\label{lemacontinuidad}
\cite{ayache}, \cite{feyel} Let $X=(X_{s,t})_{s,t\geq 0 }$ be a two-time stochastic process on a probability space $(\Omega, \mathfrak{F}, P)$ which is null almost surely on the axes and such that there exist $p>0$, $a,b\in (1/p, \infty)$, such that
$$(E(\left|\Delta_{s,t} X_{s+h, t+k}\right|^p))^{1/p}\leq M|h|^a|k|^b.$$
Then $X$ has a modification $\Tilde{X}$ with continuous trajectories. Also, the trajectories of  $\Tilde{X}$ are H\"older with exponents $(a', b')$, for $a' \in (0, a- 1/p)$, $b' \in (0, b- 1/p)$, that is, for any $\omega \in \Omega$ exists $M_{\omega}>0$ such that for any $s,s',t,t',$
$$|\Delta_{s,t}\Tilde{X}_{s',t'}(\omega)|\leq M_{\omega}(s'-s)^{a'}(t'-t)^{b'}, \ \ \ \ \  s<s',\  t<t'.$$
\end{lemma}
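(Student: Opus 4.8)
The plan is to deduce this from a two-parameter Garsia--Rodemich--Rumsey (GRR) type inequality applied to the rectangular increments of $X$, as in \cite{ayache}, \cite{feyel}. First I would reduce to a fixed square: it suffices to produce, for each $N\geq 1$, a continuous modification of $X$ on $[0,N]^2$ with the stated H\"older property, since any two such modifications agree almost surely on overlaps and hence patch together into a modification on $[0,\infty)^2$. After rescaling $s$ and $t$ I work on $I=[0,1]^2$.

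Next I would record the GRR estimate in two-parameter form: for a suitable increasing $\Psi$ (e.g.\ $\Psi(u)=|u|^{p}$) and increasing weights $p_1,p_2$ with $p_i(0^+)=0$, if
\[
B:=\int_{I}\!\!\int_{I}\Psi\!\left(\frac{\bigl|\Delta_{x\wedge x',\,y\wedge y'}X_{x\vee x',\,y\vee y'}\bigr|}{p_1(|x-x'|)\,p_2(|y-y'|)}\right)dx\,dy\,dx'\,dy'<\infty,
\]
then there is an absolute constant $K$ such that for all $(s,t)\prec(s',t')$ in $I$
\[
\bigl|\Delta_{s,t}X_{s',t'}\bigr|\leq K\int_0^{s'-s}\!\!\int_0^{t'-t}\Psi^{-1}\!\left(\frac{KB}{u^{2}v^{2}}\right)dp_1(u)\,dp_2(v).
\]
This is obtained by applying the classical one-parameter GRR lemma first in the variable $s'$ to the increment $s'\mapsto X_{s',t'}-X_{s,t'}$ and then in $t'$, keeping track of the resulting corner rectangular increments.

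Now I would specialize to $\Psi(u)=|u|^{p}$ and $p_i(u)=u^{\alpha_i}$ with $2/p<\alpha_1<a+1/p$ and $2/p<\alpha_2<b+1/p$; such $\alpha_i$ exist precisely because $a,b>1/p$. Taking expectations and using the hypothesis together with Tonelli,
\[
E[B]\leq M^{p}\int_0^1\!\!\int_0^1|x-x'|^{(a-\alpha_1)p}dx\,dx'\int_0^1\!\!\int_0^1|y-y'|^{(b-\alpha_2)p}dy\,dy'<\infty,
\]
since $(a-\alpha_1)p>-1$ and $(b-\alpha_2)p>-1$; hence $B<\infty$ on an event $\Omega_0$ of probability $1$. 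On $\Omega_0$, inserting $\Psi^{-1}(w)=w^{1/p}$ and $dp_i(u)=\alpha_i u^{\alpha_i-1}du$ into the displayed bound and evaluating the elementary integral gives $|\Delta_{s,t}X_{s',t'}|\leq C\,B^{1/p}(s'-s)^{\alpha_1-2/p}(t'-t)^{\alpha_2-2/p}$ for all $(s,t)\prec(s',t')$ in $I$. Letting $\alpha_1\uparrow a+1/p$ and $\alpha_2\uparrow b+1/p$ along sequences (intersecting the corresponding full-measure events) yields, for every $a'<a-1/p$ and $b'<b-1/p$, the pathwise bound $|\Delta_{s,t}X_{s',t'}(\omega)|\leq M_\omega(s'-s)^{a'}(t'-t)^{b'}$ on $\Omega_0$.

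It remains to pass from rectangular increments to the process. Because $X$ is null on the axes, for $(s,t)\preceq(s',t')$ one has $X_{s,t}=\Delta_{0,0}X_{s,t}$ and $X_{s',t'}-X_{s,t}$ is a sum of at most two rectangular increments each with one side on an axis; with the bound above this shows that on $\Omega_0$ the restriction of $X$ to the rational points of $I$ is uniformly continuous, so it extends to a continuous $\widetilde X$ on $I$, to which the rectangular-increment H\"older bound passes by density. The same axis decomposition and the moment hypothesis give $E|X_{z'}-X_{z}|^{p}\to 0$ as $z'\to z$, so $X$ is continuous in probability and hence $\widetilde X_z=X_z$ almost surely for each $z$; thus $\widetilde X$ is the desired modification, and patching over $N$ finishes the argument. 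I expect the main technical load to be the two-parameter GRR inequality itself --- the double iteration and the bookkeeping of the corner terms --- and the verification that $E[B]<\infty$ reduces to exactly $a,b>1/p$; the reduction to a square, the axis-nullity reconstruction, and the $L^{p}$-continuity step are routine.
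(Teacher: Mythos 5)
The paper itself offers no proof of this lemma---it is imported verbatim from \cite{ayache} and \cite{feyel}---so there is nothing internal to compare against; your Garsia--Rodemich--Rumsey route is, however, exactly in the spirit of the fractional-integral method of \cite{feyel}, while the other standard route (dyadic Kolmogorov--Chentsov chaining on rectangular increments, which is essentially what lies behind the criterion in \cite{ayache}) avoids GRR altogether. Your exponent bookkeeping is correct: with $\Psi(u)=|u|^p$ and $p_i(u)=u^{\alpha_i}$, $2/p<\alpha_1<a+1/p$, $2/p<\alpha_2<b+1/p$, the finiteness of $E[B]$ uses precisely $a,b>1/p$ and the resulting exponents $\alpha_1-2/p$, $\alpha_2-2/p$ exhaust $(0,a-1/p)\times(0,b-1/p)$; the reduction to squares, the axis decomposition $X_{s',t'}-X_{s,t}=\Delta_{s,0}X_{s',t'}+\Delta_{0,t}X_{s,t'}$, and the identification of the modification via $L^p$-continuity are all sound.

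Two steps need repair. First, the two-parameter GRR inequality for general $\Psi$ does not follow by simply applying the one-parameter lemma in $s'$ and then in $t'$: after the first application the quantity to be iterated sits inside $\Psi^{-1}$ of an integral and loses the product structure the lemma needs. In the power case you actually use, iteration does work because $\Psi^{-1}(w)=w^{1/p}$ commutes with the $L^p$-type integral: apply GRR to $y\mapsto X_{x\vee x',y}-X_{x\wedge x',y}$ for fixed $x,x'$, raise to the power $p$, integrate in $(x,x')$ against $|x-x'|^{-\alpha_1 p}$, then apply GRR in the first variable; so either restrict the statement to $\Psi(u)=|u|^p$ or invoke a genuine multiparameter GRR theorem. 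Second, and more seriously as written, GRR applied to a merely measurable path gives the increment bound only for almost every pair of arguments, not for all pairs; your claim of the bound ``for all $(s,t)\prec(s',t')$'' and the ensuing uniform continuity of $X$ on the rationals (a null set) is therefore circular. The standard repair keeps your architecture: on $\Omega_0$ the bound holds on a full-measure, hence dense, set of arguments; the restriction of $X$ to that set is uniformly continuous and extends to a continuous field $\tilde X$ satisfying the H\"older bound everywhere, and your $L^p$-continuity step then shows $\tilde X$ is a modification. (Note also that the GRR route tacitly assumes jointly measurable paths so that $B$ and $E[B]$ make sense---the chaining proof needs no such assumption---and that your patching over $[0,N]^2$ yields a constant $M_\omega$ per compact rectangle, which is what the lemma can legitimately assert and all the paper uses.) With these corrections the argument is complete.
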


\begin{proposition}
 The weighted fractional Brownian sheet $(W_{s,t})_{s,t\geq 0 }$ has a modification $(\Tilde{W}_{s,t})_{s,t\geq 0}$ with continuous trajectories. Also, the trajectories of  $\Tilde{W}$ are H\"older with exponents $(x, y)$ for any $x \in (0, \frac{1}{2}\delta_1), y \in (0, \frac{1}{2}\delta_2)$,
where $\delta_i$ are as in (\ref{eqdeltas}).
\end{proposition}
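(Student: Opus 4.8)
The plan is to apply Lemma \ref{lemacontinuidad} with a suitable choice of $p$, using the Gaussian structure of $W$ to convert the second-moment bound \eqref{eq26} into the $L^p$-bound required by the lemma, and then read off the Hölder exponents.

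First I would note that $W$ is null on the axes: from \eqref{eq1} and \eqref{equ2}, $C^{(i)}(0,v) = 0$, so $K_W((0,t),(0,t)) = 0$ and likewise $K_W((s,0),(s,0)) = 0$, hence $W_{0,t} = W_{s,0} = 0$ almost surely. Next, since $\Delta_{s,t}W_{s+h,t+k}$ is a centered Gaussian random variable, for any $p > 0$ there is a constant $c_p$ with
\[
\left(E\left(\left|\Delta_{s,t}W_{s+h,t+k}\right|^p\right)\right)^{1/p} = c_p\left(E\left(\left(\Delta_{s,t}W_{s+h,t+k}\right)^2\right)\right)^{1/2}.
\]
Combining this with the bound \eqref{eq26}, which gives $E((\Delta_{s,t}W_{s+h,t+k})^2) \leq M\, h^{\delta_1} k^{\delta_2}$ on the region $[\varepsilon, T]^2$, yields
\[
\left(E\left(\left|\Delta_{s,t}W_{s+h,t+k}\right|^p\right)\right)^{1/p} \leq M'\, h^{\delta_1/2}\, k^{\delta_2/2}.
\]
So Lemma \ref{lemacontinuidad} applies with exponents $a = \delta_1/2$, $b = \delta_2/2$, provided we take $p$ large enough that $\delta_1/2 > 1/p$ and $\delta_2/2 > 1/p$, i.e. $p > \max(2/\delta_1, 2/\delta_2)$; since $\delta_i > 0$ always, such $p$ exists. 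The lemma then produces a continuous modification $\tilde W$ whose trajectories are Hölder with exponents $(x,y)$ for $x \in (0, \delta_1/2 - 1/p)$, $y \in (0, \delta_2/2 - 1/p)$, and letting $p \to \infty$ gives the full ranges $x \in (0, \tfrac12\delta_1)$, $y \in (0, \tfrac12\delta_2)$.

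The one delicate point — \textbf{the main obstacle} — is that the bound \eqref{eq26}, hence Lemma \ref{lemacontinuidad}, is only available on rectangles bounded away from the axes, $[\varepsilon, T]^2$, whereas continuity and the Hölder property are asserted on all of $\mathbb{R}_+^2$. I would handle this by a localization/exhaustion argument: apply the lemma on each square $[\varepsilon, T]^2$ (or rather on $[0,T]^2$, noting that the process vanishes on the axes and the estimate degrades only mildly near them — in fact when $a_i \le 0$ the first of the two bounds on $A_{s,t}$ quoted from \cite{bgt2003} requires $s,t \ge \varepsilon$, but the second bound $A_{s,t} \le M|t-s|^{1+a_i+b_i}$ holds for $s,t \ge 0$, giving \eqref{eq26} with $\delta_i = 1+a_i+b_i$ on all of $[0,T]^2$ when $1+a_i+b_i>0$, and with $\delta_i = 1+b_i$ uniformly when $a_i \ge 0$). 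One then obtains a continuous modification on $[0,T]^2$ for every $T$, and these are consistent on overlaps (two continuous modifications of the same process on $[0,T]^2$ agree a.s.), so they patch together to a single continuous modification $\tilde W$ on $\mathbb{R}_+^2$; the Hölder bounds, being local, transfer directly.

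I do not expect any further difficulty: the argument is the standard Kolmogorov–Chentsov scheme in two parameters, with the Gaussian hypercontractivity-type identity doing the work of upgrading $L^2$ to $L^p$, and the particular values of the Hölder exponents falling out of the explicit $\delta_i$ in \eqref{eqdeltas}.
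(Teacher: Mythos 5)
Your proposal is correct and follows essentially the same route as the paper: the Gaussian moment identity upgrades the second-moment bound (\ref{eq26}) to the $L^{r}$ bound required by Lemma \ref{lemacontinuidad}, with $r>\max\{2/\delta_1,2/\delta_2\}$ and $r\to\infty$ giving the stated H\"older exponents. Your extra localization/patching discussion of where (\ref{eq26}) is actually valid near the axes is a point the paper's one-line proof passes over silently, but it is a refinement of, not a departure from, the same argument.
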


\begin{proof}
 From the moments of the normal distribution  and  equations (\ref{eqesperanza}) and (\ref{eq26}) we have
\begin{align*}
&(E(|\Delta_{s,t}W_{s+h,t+k}|^r))^{1/r}\\
&=C\biggl(\int_s^{s+h}u^{a_1}(s+h-u)^{b_1}du\int_t^{t+k}v^{a_2}(t+k-v)^{b_2}dv\biggr)^{1/2}\\
&\leq Mh^{\delta_1/2}k^{\delta_2/2},
\end{align*}
with some constants $C$ and $M$.  Taking $r> \max\left\{2/\delta_1, 2/\delta_2\right\}$ we have the conditions of Lemma \ref{lemacontinuidad}, and the result follows. 
\end{proof}

 \section{Approximation }

The random field $W$, for some values of the parameters $a_i, b_i$, arises as a limit in distribution of occupation time fluctuations of a system of particles of two types that move as pairs in $\mathbb{R}\times \mathbb{R}$ according to independent stable L\'evy  processes. The  system is described as follows. Given a  Poisson random measure on $\mathbb{R}\times \mathbb{R}$ with intensity measure $\mu$, $N_{0,0}= \text{Pois}(\mu)$, from each point $(x_1, x_2)$ of $N_{0,0}$ come out two independent L\'evy processes, from $x_1$ comes out $\xi^{x_1} $, symmetric $\alpha_1$-stable,  and from $x_2$ comes out $\zeta^{x_2}$, symmetric $\alpha_2$-stable $(0<\alpha_i \leq 2,\   i=1,2)$. Let $N=(N_{u,v})_{ u, v \geq 0}$ denote  random measure process on $\mathbb{R}\times \mathbb{R}$ such that  $N_{u,v}$  represents the configuration of particles at time $(u,v)$,
\begin{equation}
\label{eqdefN}
N_{u,v}= \sum_{(x_1, x_2)\in N_{0,0}}\delta_{(\xi^{x_1}_u, \zeta^{x_2}_v)}=\sum_{(x_1, x_2)\in N_{0,0}}\delta_{\xi^{x_1}_u}\otimes\delta_{\zeta^{x_2}_v}.
\end{equation}
For $\varphi, \psi \in L^1(\mathbb{R})$ ($\varphi, \psi \neq 0$) fixed, we write
\begin{equation}
\label{eqdefN2}
\langle N_{u,v}, \varphi\otimes \psi\rangle= \sum_{(x_1, x_2)\in N_{0,0}}\langle \delta_{\xi^{x_1}_u}\otimes\delta_{\zeta^{x_2}_v},\varphi\otimes \psi \rangle = \sum_{(x_1, x_2)\in N_{0,0}}\varphi(\xi^{x_1}_u)\psi(\zeta^{x_2}_v).
\end{equation}
We define the occupation time process of $N$ by
\begin{equation}
\label{eqdefL}
\left\langle L_{s,t}, \varphi\otimes \psi\right\rangle= \int_0^s\int_0^t \left\langle N_{u,v}, \varphi\otimes \psi\right\rangle dv du, \  \  \ \ s,t\geq 0,
\end{equation}
and the rescaled occupation time fluctuation process by
\begin{equation}
\label{eqdefXT}
X_T(s,t)= \frac{1}{F_T} \left(\left\langle L_{Ts, Tt},\varphi\otimes \psi\right\rangle - E(\left\langle L_{Ts, Tt}, \varphi\otimes \psi\right\rangle\right)), \  \  \  \  s,t\geq 0,
\end{equation}
where $T$ is the  time scaling and $F_T$ is a norming. We choose the intensity measure $\mu$ for the Poisson initial particle configuration as
$$\mu(dx_1, dx_2)=\mu_1\otimes \mu_2(dx_1, dx_2)=\mu_1(dx_1)\mu_2(dx_2),$$
with
\begin{equation}
\label{eqmu}
\mu_i(dx_i)=dx_i/\left|x_i\right|^{\gamma_i}, \  0 \leq \gamma_i < 1, \  i=1,2.
\end{equation}

The homogeneous case corresponds to  $\gamma_1=\gamma_2= 0$ and it gives rise to the usual fractional Brownian sheet.  We will show that for 
\begin{equation}
\label{eqdefFT}
F_T = F_T^{(1)}F_T^{(2)}\  \text{with} \ F^{(i)}_T= T^{1-(1+\gamma_i)/2\alpha_i}, \ 0\leq \gamma_i < 1 < \alpha_i, \  i=1, 2,	
\end{equation} 
the finite-dimensional distributions of the process $X_T $ converge in law as $T \to \infty$ to those of weighted fractional Brownian sheet with parameters $a_i=-\gamma_i/\alpha_i$, $b_i= 1 - 1/\alpha_i$, $i=1,2$. In the case $a_1=a_2=0$ we will also prove tightness.

\begin{theorem}
\label{teo1}
If $X_T$ is the process defined in (\ref{eqdefXT}),  $0\leq\gamma_i < 1 < \alpha_i$, $i=1,2$, with $F_T$ defined by (\ref{eqdefFT}), then the finite-dimensional distributions of $X_T$ converge  as $T\to \infty$ to the finite-dimensional distributions of $DW$, where $W$ is weighted fractional Brownian sheet with parameters $a_1=-\gamma_1/\alpha_1, b_1= 1- 1/\alpha_1$, $a_2=-\gamma_2/\alpha_2$, $b_2= 1- 1/\alpha_2$, and $D$ is the constant 
\begin{equation}
\label{defD}
D=\int_{\mathbb{R}}\varphi(x)dx\int_{\mathbb{R}}\psi(x)dx\biggl(\prod_{i=1}^{2}\frac{1}{1-1/\alpha_i}p_1^{\alpha_i}(0)\biggl(\int_{\mathbb{R}}\frac{p_1^{\alpha_i}(x)}{|x|^{\gamma_i}}dx\biggr)\biggr)^{1/2},
\end{equation}
where $p_t^{\alpha}(x)$ is the density of the symmetric $\alpha$-stable L\'evy process, which is given by
$$p_t^{\alpha}(x)=\frac{1}{2\pi}\int_{\mathbb{R}}\exp\left\{-\left(ixy + t|y|^{\alpha}\right)\right\}dy.$$
\end{theorem}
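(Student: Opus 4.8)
\noindent\emph{Proof strategy.} The plan is to prove convergence of the finite-dimensional distributions by the Cram\'er--Wold device combined with the characteristic functional of a Poisson random measure. Fix times $(s_1,t_1),\dots,(s_n,t_n)$ and reals $\lambda_1,\dots,\lambda_n$, and for a point $(x_1,x_2)$ carrying its two independent L\'evy paths $\xi^{x_1},\zeta^{x_2}$ put, after the change of variables $u\mapsto Tu$, $v\mapsto Tv$,
\[
V_T(x_1,x_2)=\frac{T^2}{F_T}\sum_{j=1}^n\lambda_j\int_0^{s_j}\!\!\int_0^{t_j}\varphi(\xi^{x_1}_{Tu})\psi(\zeta^{x_2}_{Tv})\,dv\,du .
\]
Since $\sum_j\lambda_jX_T(s_j,t_j)$ is a centered sum over the points of $N_{0,0}=\text{Pois}(\mu)$ with conditionally independent paths attached to each point, its characteristic function equals $\exp\bigl(\int_{\mathbb{R}^2}(E_{x_1,x_2}[e^{iV_T}]-1-iE_{x_1,x_2}[V_T])\,\mu(dx_1,dx_2)\bigr)$, where $E_{x_1,x_2}$ denotes expectation over the pair of paths started at $(x_1,x_2)$. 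Writing $e^{iz}-1-iz=-z^2/2+R(z)$ with $|R(z)|\le\min(z^2,|z|^3/6)$, the exponent becomes $-\tfrac12\int E_{x_1,x_2}[V_T^2]\,\mu+\int E_{x_1,x_2}[R(V_T)]\,\mu$, so it suffices to show
\[
\text{(i)}\ \int_{\mathbb{R}^2}E_{x_1,x_2}[V_T^2]\,\mu\longrightarrow D^2\,\mathrm{Var}\Bigl(\sum_j\lambda_jW(s_j,t_j)\Bigr),\qquad\text{(ii)}\ \int_{\mathbb{R}^2}E_{x_1,x_2}[|V_T|^3]\,\mu\longrightarrow0;
\]
then the characteristic function converges to that of the centered Gaussian vector $(DW(s_j,t_j))_j$, which is the assertion.

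\noindent For (i), independence of $\xi$ and $\zeta$ factorizes $\int E_{x_1,x_2}[V_T^2]\,\mu$ as $(T^2/F_T)^2\sum_{j,k}\lambda_j\lambda_k\bigl(\int_0^{s_j}\!\int_0^{s_k}A_T(u,u')\,du\,du'\bigr)\bigl(\int_0^{t_j}\!\int_0^{t_k}B_T(v,v')\,dv\,dv'\bigr)$ with $A_T(u,u')=\int_{\mathbb{R}}E_{x_1}[\varphi(\xi^{x_1}_{Tu})\varphi(\xi^{x_1}_{Tu'})]\mu_1(dx_1)$ and $B_T$ the analogue with $\psi,\mu_2,\alpha_2,\gamma_2$. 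By the Markov property and the self-similarity $p^\alpha_t(x)=t^{-1/\alpha}p^\alpha_1(t^{-1/\alpha}x)$, for $u<u'$ one has $A_T(u,u')=\int\varphi(y_1)\varphi(y_2)p^{\alpha_1}_{T(u'-u)}(y_2-y_1)\bigl(\int p^{\alpha_1}_{Tu}(y_1-x_1)|x_1|^{-\gamma_1}dx_1\bigr)dy_1dy_2$; substituting $x_1=(Tu)^{1/\alpha_1}w$ inside and $y_2-y_1=(T(u'-u))^{1/\alpha_1}z$, and letting $T\to\infty$ by dominated convergence (using $\varphi\in L^1$, $\|p_1^{\alpha_1}\|_\infty<\infty$, and $\int|x|^{-\gamma_1}p_1^{\alpha_1}(x)dx<\infty$, the last because $\gamma_1<1$), gives $A_T(u,u')\sim c_1(T(u\wedge u'))^{a_1}(T|u-u'|)^{b_1-1}$ with $c_1=p_1^{\alpha_1}(0)\bigl(\int|x|^{-\gamma_1}p_1^{\alpha_1}(x)dx\bigr)\bigl(\int\varphi\bigr)^2$, since $a_1=-\gamma_1/\alpha_1$ and $b_1-1=-1/\alpha_1$. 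As $(T^2/F_T)^2=T^{(1+\gamma_1)/\alpha_1+(1+\gamma_2)/\alpha_2}$ exactly cancels the accumulated powers of $T$, the limit of $\int E_{x_1,x_2}[V_T^2]\,\mu$ is $c_1c_2\sum_{j,k}\lambda_j\lambda_k\bigl(\int_0^{s_j}\!\int_0^{s_k}(u\wedge u')^{a_1}|u-u'|^{b_1-1}du\,du'\bigr)\bigl(\int_0^{t_j}\!\int_0^{t_k}(v\wedge v')^{a_2}|v-v'|^{b_2-1}dv\,dv'\bigr)$. An elementary Beta-integral identity, $\int_0^\sigma\!\int_0^{\sigma'}(u\wedge u')^a|u-u'|^{b-1}du\,du'=b^{-1}C^{(i)}(\sigma,\sigma')$ for $b>0$ (it reduces to $B(a+1,b)/(a+b+1)=b^{-1}B(a+1,b+1)$), turns this into $\tfrac{c_1c_2}{b_1b_2}\sum_{j,k}\lambda_j\lambda_kC^{(1)}(s_j,s_k)C^{(2)}(t_j,t_k)$, and $\tfrac{c_1c_2}{b_1b_2}=D^2$ by the definition of $D$, which proves (i).

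\noindent For (ii), expanding $|V_T|^3$, replacing $\varphi,\psi$ by their moduli, ordering the three time arguments $u_{(1)}\le u_{(2)}\le u_{(3)}$ and applying the Markov property exactly as above, the $x_1$-integral of each triple product is bounded by $C(Tu_{(1)})^{a_1}(T(u_{(2)}-u_{(1)}))^{b_1-1}(T(u_{(3)}-u_{(2)}))^{b_1-1}$; multiplying the resulting power of $T$ by the first-coordinate part of $(T^2/F_T)^3$ yields $T^{(\gamma_1-1)/(2\alpha_1)}$, and likewise $T^{(\gamma_2-1)/(2\alpha_2)}$ for the second coordinate, so the product tends to $0$ because $\gamma_i<1<\alpha_i$, while the remaining time integrals are finite since $a_i>-1$ and $b_i-1=-1/\alpha_i>-1$. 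This gives (ii).

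\noindent The main obstacle is step (i): getting the $T$-scaling and the constant $D$ exactly right at the same time, and in particular recognizing the limiting double time integral as $b_i^{-1}C^{(i)}$, so that the Gaussian limit has covariance precisely $D^2K_W$; the remaining technical point is to justify the dominated-convergence passages rigorously with the singular weight $|x|^{-\gamma_i}$ and only $L^1$ test functions.
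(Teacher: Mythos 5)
Your proposal is correct and follows essentially the same route as the paper: Cram\'er--Wold plus the Poisson characteristic functional, a second-order expansion with a cubic remainder, the scaling $u\mapsto Tu$ together with self-similarity of the stable density to identify the limit covariance $D^2\,C^{(1)}C^{(2)}$ (your Beta-identity step is just the paper's explicit evaluation of the same double time integral), and the power count $T^{(\gamma_i-1)/2\alpha_i}\to 0$ for the third-moment term. The only difference is organizational (your per-point functional $V_T$ and the factors $A_T,B_T$ versus the paper's semigroup expressions), and your acknowledged dominated-convergence points are treated at the same level of detail as in the paper.
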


\begin{proof}
For each $k \in \mathbb{N}$,  $d_1, \cdots , d_k \in \mathbb{R}$ and $(s_1, t_1), \cdots, (s_k, t_k)\in \mathbb{R}_+^2$, we must show that 
$$\sum_{j=1}^k d_j X^T_{s_j, t_j}\ \ \ \text{converges in law to}\ \ \  \sum_{j=1}^k d_j W_{s_j, t_j} \ \ \ \text{as} \  T\to \infty,$$
which we do by proving convergence of the corresponding characteristic functions. From the fact that $N_{0,0}=\text{Pois}(\mu_1\otimes\mu_2)$, we have for each $\theta \in \mathbb{R}$,
\begin{align}
\label{eq3}
&C_T(\theta):= E\exp\biggl\{i\theta \sum_{j=1}^k d_j X^T_{s_j, t_j}\biggr\}\notag\\ 
&=\exp\biggl\{-\frac{i\theta}{F_T}\sum_{j=1}^k d_j E(\langle L^T_{s_j, t_j},\varphi\otimes \psi\rangle)\biggr\} \notag\\
&\times \exp\biggl\{-\int_{\mathbb{R}\times\mathbb{R}}\biggl[1-E_{(x_1, x_2)}\biggl(\exp\biggl\{\frac{i\theta}{F_T}\sum_{j=1}^k d_j \langle L^T_{s_j, t_j},\varphi\otimes \psi\rangle\biggr\}\biggr)\biggr]\mu_1(dx_1)\mu_2(dx_2)\biggr\},
\end{align}
where $E_{(x_1, x_2)}$ denotes expectation starting with one pair of initial particles in $(x_1, x_2)$, (see e.g. \cite{kallenberg}, mixed Poisson process).

We also need the mean and the covariance of $N$. From the Poisson initial condition, the first and second moments are given by
\begin{align}
 \label{e98}
E\left(\left\langle N_{u,v}, \varphi\otimes \psi \right\rangle\right)&=\int_{\mathbb{R}\times \mathbb{R}}E_{(x_1, x_2)}\left(\left\langle N_{u,v}, \varphi\otimes \psi \right\rangle\right)\mu_1(dx_1)\mu_2(dx_2)\notag \\
&=\int_{\mathbb{R}\times \mathbb{R}}E\left(\varphi(\xi^{x_1}_u)\psi(\zeta^{x_2}_v)\right)\mu_1(dx_1)\mu_2(dx_2)
\end{align}
and
\begin{align*}
&E\left(\left\langle N_{u_1,v_1}, \varphi\otimes \psi \right\rangle\left\langle N_{u_2,v_2}, \varphi\otimes \psi \right\rangle\right)\notag\\
&=\int_{\mathbb{R}\times \mathbb{R}}E_{(x_1, x_2)}\left(\left\langle N_{u_1,v_1}, \varphi\otimes \psi \right\rangle\left\langle N_{u_2,v_2}, \varphi\otimes \psi \right\rangle\right)\mu_1(dx_1)\mu_2(dx_2)\notag \\
&+\int_{\mathbb{R}\times \mathbb{R}}E_{(x_1, x_2)}\left(\left\langle N_{u_1,v_1}, \varphi\otimes \psi \right\rangle\right)\mu_1(dx_1)\mu_2(dx_2)\notag\\
&\times\int_{\mathbb{R}\times \mathbb{R}}E_{(x_1, x_2)}\left(\left\langle N_{u_2,v_2}, \varphi\otimes \psi \right\rangle\right)\mu_1(dx_1)\mu_2(dx_2)\notag\\
&=\int_{\mathbb{R}\times \mathbb{R}}E\left(\varphi(\xi^{x_1}_{u_1})\psi(\zeta^{x_2}_{v_1})\varphi(\xi^{x_1}_{u_2})\psi(\zeta^{x_2}_{v_2})\right)\mu_1(dx_1)\mu_2(dx_2)\notag \\
&+\int_{\mathbb{R}\times \mathbb{R}}E\left(\varphi(\xi^{x_1}_{u_1})\psi(\zeta^{x_2}_{v_1})\right)\mu_1(dx_1)\mu_2(dx_2)\int_{\mathbb{R}\times \mathbb{R}}E\left(\varphi(\xi^{x_1}_{u_2})\psi(\zeta^{x_2}_{v_2})\right)\mu_1(dx_1)\mu_2(dx_2),\notag\\
\end{align*}
hence, by the independence of $\xi$ and $\zeta$, and the Markov property, 
\begin{align}
\label{defcov}
&Cov\left(\left\langle N_{u_1,v_1}, \varphi\otimes \psi \right\rangle,\left\langle N_{u_2,v_2}, \varphi\otimes \psi \right\rangle\right)\notag \\
&=\int_{\mathbb{R}\times \mathbb{R}}E_{(x_1, x_2)}\left(\left\langle N_{u_1,v_1}, \varphi\otimes \psi \right\rangle\left\langle N_{u_2,v_2}, \varphi\otimes \psi \right\rangle\right)\mu_1(dx_1)\mu_2(dx_2)\notag \\
&=\int_{\mathbb{R}}\mathcal{T}^{\alpha_1}_{u_1\wedge u_2}(\varphi\mathcal{T}^{\alpha_1}_{|u_1-u_2|}\varphi)(x_1)\mu_1(dx_1)\int_{\mathbb{R}}\mathcal{T}^{\alpha_2}_{v_1\wedge v_2}(\psi\mathcal{T}^{\alpha_2}_{|v_1-v_2|}\psi)(x_2)\mu_2(dx_2),
\end{align}
where  $\mathcal{T}^{\alpha_i}_t$ denotes the semigroup of the symmetric $\alpha_i$-stable process.

Using an expansion of the characteristic function (see e.g \cite{billins}, p. 297) in the integrand  with respect to $(x_1,x_2)$ in (\ref{eq3}), it is  equal to
\begin{align*}
& 1+\frac{i\theta}{F_T}E_{(x_1, x_2)}\biggl(\sum_{j=1}^k d_j\langle L^T_{s_j, t_j},\varphi\otimes \psi \rangle\biggr)- \frac{\theta^2}{2F_T^2}E_{(x_1, x_2)}\biggl(\sum_{j=1}^k d_j\langle L^T_{s_j, t_j},\varphi\otimes \psi \rangle\biggr)^2
\\
&+\delta_{(x_1, x_2)}^T,
\end{align*}
where 
\begin{equation}
\label{eqdelta}
	|\delta_{(x_1, x_2)}^T|\leq \frac{\theta^3}{F_T^3}E_{(x_1, x_2)}\biggl(\sum_{j=1}^k d_j\langle L^T_{s_j, t_j},\varphi\otimes \psi\rangle\biggr)^3.
\end{equation}
Since 
$$\sum_{j=1}^k d_jE(\langle L^T_{s_j, t_j},\varphi\otimes \psi\rangle)=\int_{\mathbb{R}\times\mathbb{R}}\sum_{j=1}^k d_jE_{(x_1, x_2)}(\langle L^T_{s_j, t_j},\varphi\otimes \psi\rangle)\mu_1(dx_1)\mu_2(dx_2),$$
then (\ref{eq3}) becomes
\begin{align}
\label{eq4}
C_T(\theta)=\exp\biggl\{\biggr.-\int_{\mathbb{R}\times\mathbb{R}}\biggl[\biggr.&\frac{\theta^2}{2F_T^2}E_{(x_1, x_2)}\biggl(\sum_{j=1}^k d_j\langle L^T_{s_j, t_j},\varphi\otimes \psi\rangle\biggr)^2  \notag\\ 
&+ \biggl.\biggl.\delta_{(x_1, x_2)}^T\biggr] \mu_1(dx_1)\mu_2(dx_2)\biggr\}  
\end{align}
and by (\ref{eqdefL}) and a previous calculation,
 \begin{align}
 \label{eq50}
 &\int_{\mathbb{R}\times \mathbb{R}}\frac{1}{F_T^2}E_{(x_1,x_2)}\biggl(\sum_{j=1}^{k}d_j\langle L_{s,t}^T, \varphi\otimes \psi\rangle\biggr)^2\mu_1(dx_1)\mu_2(dx_2)\notag\\
 =&\frac{1}{F_T^2}\sum_{j=1}^{k}d_j\sum_{j'=1}^{k}d_{j'}\int_{\mathbb{R}\times \mathbb{R}}\int_0^{Ts_j}\int_0^{Tt_j}\int_0^{Ts_{j'}}\int_0^{Tt_{j'}}\notag\\
 &E_{(x_1, x_2)}\left(\left\langle N_{u_1,v_1}, \varphi\otimes \psi \right\rangle\left\langle N_{u_2,v_2}, \varphi\otimes \psi \right\rangle\right)dv_2du_2dv_1du_1\mu_1(dx_1)\mu_2(dx_2)\notag\\
 =&\sum_{j=1}^{k}d_j\sum_{j'=1}^{k}d_{j'}\frac{1}{F_T^{(1)2}}\int_{\mathbb{R}}\int_0^{Ts_j}\int_0^{Ts_{j'}}\mathcal{T}^{\alpha_1}_{u_1\wedge u_2}(\varphi\mathcal{T}^{\alpha_1}_{|u_1-u_2|}\varphi)(x_1)du_2du_1\frac{dx_1}{|x_1|^{\gamma_1}}\notag\\
 &\hspace{1.7cm}\times\frac{1}{F_T^{(2)2}}\int_{\mathbb{R}}\int_0^{Tt_j}\int_0^{Tt_{j'}}\mathcal{T}^{\alpha_2}_{v_1\wedge v_2}(\psi\mathcal{T}^{\alpha_2}_{|v_1-v_2|}\psi)(x_2)dv_2dv_1\frac{dx_2}{|x_2|^{\gamma_2}}.
\end{align}
Now, recalling (\ref{eqdefFT}) we have
\begin{align*}
&\frac{1}{\left(T^{1-(1+\gamma)/2\alpha}\right)^2}\int_{\mathbb{R}}\int_0^{Ts_1}\int_0^{Ts_2}\mathcal{T}^{\alpha}_{u_1 \wedge u_2}(\varphi \mathcal{T}^{\alpha}_{\left|u_1 - u_2\right|}\varphi)(x)du_2du_1\frac{dx}{|x|^{\gamma}}\\
=&\frac{1}{\left(T^{1-(1+\gamma)/2\alpha}\right)^2}\int_{\mathbb{R}}\int_0^{Ts_1}\int_0^{Ts_2}\int_{\mathbb{R}}p^{\alpha}_{u_1 \wedge u_2}(x-y)\varphi(y)\\
&\hspace{4.2cm}\times\int_{\mathbb{R}}p^{\alpha}_{|u_1 - u_2|}(y-z)\varphi(z)dzdydu_2du_1\frac{dx}{|x|^{\gamma}},\\
\end{align*}
substituting $u_1=Tu'_1, u_2=Tu'_2$, using the self-similarity of the $\alpha$-stable process in $\mathbb{R}$, i.e., $p^{\alpha}_t(x)=t^{-1/\alpha}p^{\alpha}_1(t^{-1/\alpha}x)$, and then substituting $x=\left(T(u'_1 \wedge u'_2)\right)^{1/\alpha}x'$, 
\begin{align}
 \label{eq51}
=&\ T^{(1+\gamma)/\alpha}\int_{\mathbb{R}}\int_0^{s_1}\int_0^{s_2}\int_{\mathbb{R}}p^{\alpha}_{T(u'_1 \wedge u'_2)}(x-y)\varphi(y)\notag\\
&\hspace{2.8cm}\times\int_{\mathbb{R}}p^{\alpha}_{T|u'_1 - u'_2|}(y-z)\varphi(z)dzdydu'_2du'_1\frac{dx}{|x|^{\gamma}}\notag\\
=&\ T^{(\gamma-1)/\alpha}\int_{\mathbb{R}}\int_0^{s_1}\int_0^{s_2}(u'_1 \wedge u'_2)^{-1/\alpha}|u'_1 - u'_2|^{-1/\alpha}\notag\\
&\hspace{2.8cm}\times\int_{\mathbb{R}}p^{\alpha}_{1}\left((T(u'_1 \wedge u'_2))^{-1/\alpha}(x-y)\right)\varphi(y)\notag\\
&\hspace{2.8cm}\times\int_{\mathbb{R}}p^{\alpha}_{1}\left((T|u'_1 - u'_2|)^{-1/\alpha}(y-z)\right)\varphi(z)dzdydu'_2du'_1\frac{dx}{|x|^{\gamma}}\notag\\
=&\int_{\mathbb{R}}\int_0^{s_1}\int_0^{s_2}(u'_1 \wedge u'_2)^{-\gamma/\alpha}|u'_1 - u'_2|^{-1/\alpha}\int_{\mathbb{R}}p^{\alpha}_{1}\left((x'-(T(u'_1 \wedge u'_2))^{-1/\alpha}y)\right)\varphi(y)\notag\\
&\hspace{2.8cm}\times\int_{\mathbb{R}}p^{\alpha}_{1}\left((T|u'_1 - u'_2|)^{-1/\alpha}(y-z)\right)\varphi(z)dzdydu'_2du'_1\frac{dx'}{|x'|^{\gamma}}.
\end{align}
Taking $T\to \infty$ in (\ref{eq51}) we obtain the limit
\begin{align}
\label{eq52}
&p^{\alpha}_{1}(0)\left(\int_{\mathbb{R}}\varphi(y)dy\right)^2\int_{\mathbb{R}}\frac{p^{\alpha}_{1}(x)}{|x|^{\gamma}}dx\int_0^{s_1}\int_0^{s_2}(u'_1 \wedge u'_2)^{-\gamma/\alpha}|u'_1 - u'_2|^{-1/\alpha}du'_2du'_1\notag\\
=&\ p^{\alpha}_{1}(0)\left(\int_{\mathbb{R}}\varphi(y)dy\right)^2\int_{\mathbb{R}}\frac{p^{\alpha}_{1}(x)}{|x|^{\gamma}}dx\notag\\
\times&\frac{1}{1-1/\alpha}\int_0^{s_1 \wedge s_2}u^{-\gamma/\alpha}\left[\left(s_1-u\right)^{1-1/\alpha}  + \left(s_2-u\right)^{1-1/\alpha}\right]du.
\end{align}

By (\ref{eq50}), (\ref{eq51}) and (\ref{eq52}),
\begin{align}
\label{e35}
&\lim_{T\to \infty}\frac{1}{F_T^2}\int_{\mathbb{R}\times\mathbb{R}}E_{(x_1, x_2)}\biggl(\sum_{j=1}^k d_j\langle L^T_{s_j, t_j};\varphi\otimes \psi\rangle\biggr)^2\mu_1(dx_1)\mu_2(dx_2)\notag\\ 
&=\frac{p^{\alpha_1}_{1}(0)}{1-1/\alpha_1}\frac{p^{\alpha_2}_{1}(0)}{1-1/\alpha_2}\biggl(\int_{\mathbb{R}}\varphi(y)dy\biggr)^2\biggl(\int_{\mathbb{R}}\psi(y)dy\biggr)^2\int_{\mathbb{R}}\frac{p^{\alpha_1}_{1}(x)}{|x|^{\gamma_1}}dx\int_{\mathbb{R}}\frac{p^{\alpha_2}_{1}(x)}{|x|^{\gamma_2}}dx\notag\\
&\times  \sum_{j,j'=1}^k d_jd_j'\int_0^{s_j \wedge s_{j'}}u^{-\gamma_1/\alpha_1}[\left(s_j-u\right)^{1-1/\alpha_1}  + \left(s_{j'}-u\right)^{1-1/\alpha_1}]du\notag\\
&\hspace{1.5cm}\times \int_0^{t_j \wedge t_{j'}}v^{-\gamma_2/\alpha_2}[\left(t_j-v\right)^{1-1/\alpha_2}  + \left(t_{j'}-v\right)^{1-1/\alpha_2}]dv\notag\\
&= D^2 \sum_{j,j'=1}^k d_jd_j' C^{(1)}(s_j, s_{j'})C^{(2)}(t_j, t_{j'}),
\end{align}
where $D$ is defined by (\ref{defD}) and 	$C^{(i)}$ is as in (\ref{equ2}) with $a_i=-\gamma_i/\alpha_i,  b_i=1-1/\alpha_i$.

Proceeding similary with the third order term we find
 \begin{align}
 \label{eq30}
 &\int_{\mathbb{R}\times \mathbb{R}}\frac{1}{F_T^3}E_{(x_1,x_2)}\biggl(\sum_{j=1}^{k}d_j\langle L_{s_j,t_j}, \varphi\otimes \psi\rangle\biggr)^3\mu_1(dx_1)\mu_2(dx_2)\notag\\
 &=\frac{1}{F_T^3}\sum_{i=1}^{k}d_i\sum_{j=1}^{k}d_j\sum_{l=1}^{k}d_l\int_{\mathbb{R}\times \mathbb{R}}E_{(x_1,x_2)}(\langle L^T_{s_i,t_i}, \varphi\otimes \psi\rangle\langle L^T_{s_j,t_j}, \varphi\otimes \psi\rangle\notag\\
 &\hspace{4cm}\times\langle L^T_{s_l,t_l}, \varphi\otimes \psi\rangle)\mu_1(dx_1)\mu_2(dx_2)\notag\\
 &=\frac{1}{F_T^3}\sum_{i=1}^{k}d_i\sum_{j=1}^{k}d_j\sum_{l=1}^{k}d_l\int_{\mathbb{R}\times \mathbb{R}}\int_0^{Ts_i}\int_0^{Tt_i}\int_0^{Ts_j}\int_0^{Tt_j}\int_0^{Ts_{l}}\int_0^{Tt_{l}}\notag\\
 &\hspace{4cm}E_{(x_1,x_2)}(\langle N_{u_i,v_i}, \varphi\otimes \psi\rangle\langle N_{u_2,v_2}, \varphi\otimes \psi\rangle\langle N_{u_3,v_3}, \varphi\otimes \psi\rangle)\notag\\
 &\hspace{4cm}dv_3du_3dv_2du_2dv_1du_1\mu_1(dx_1)\mu_2(dx_2)\notag\\
&=\sum_{i=1}^{k}d_i\sum_{j=1}^{k}d_j\sum_{l=1}^{k}d_l\frac{1}{F_T^{(1)3}}\int_{\mathbb{R}}\int_0^{Ts_i}\int_0^{Ts_j}\int_0^{Ts_{l}}\mathcal{T}^{\alpha_1}_{\tilde{u}_1}\varphi(\mathcal{T}^{\alpha_1}_{\tilde{u}_2-\tilde{u}_1}\varphi(\mathcal{T}^{\alpha_1}_{\tilde{u}_3-\tilde{u}_2}\varphi))(x_1)\notag\\
&\hspace{9.8cm}d\tilde{u}_3d\tilde{u}_2d\tilde{u}_1\frac{dx_1}{|x_1|^{\gamma_1}}\notag\\
&\times\frac{1}{F_T^{(2)3}}\int_{\mathbb{R}}\int_0^{Tt_i}\int_0^{Tt_j}\int_0^{Tt_{l}}\mathcal{T}^{\alpha_2}_{\tilde{v}_1}\psi(\mathcal{T}^{\alpha_2}_{\tilde{v}_2-\tilde{v}_1}\psi(\mathcal{T}^{\alpha_2}_{\tilde{v}_3-\tilde{v}_2}\psi))(x_2)d\tilde{v}_3d\tilde{v}_2d\tilde{v}_1\frac{dx_2}{|x_2|^{\gamma_2}},
\end{align}
$\tilde{u}_1, \tilde{u}_2, \tilde{u}_3$ denoting $u_1, u_2, u_3$ in increasing order, and similarly for $\tilde{v}_1, \tilde{v}_2, \tilde{v}_3$.

Again, recalling (\ref{eqdefFT}), substituting $\tilde{u}_i=T\tilde{u}'_i$ $i=1,2,3$, using self-similarity of the $\alpha$-stable process, and then substituting $x=\left(T\tilde{u}'_1\right)^{1/\alpha}x'$, we have
\begin{align}
\label{eq31}
&\frac{1}{\left(T^{1-(1+\gamma)/2\alpha}\right)^3}\int_{\mathbb{R}}\int_0^{Ts_1}\int_0^{Ts_2}\int_0^{Ts_{3}}\mathcal{T}^{\alpha}_{\tilde{u}_1}\varphi(\mathcal{T}^{\alpha}_{\tilde{u}_2-\tilde{u}_1}\varphi(\mathcal{T}^{\alpha}_{\tilde{u}_3-\tilde{u}_2}\varphi))(x)d\tilde{u}_3d\tilde{u}_2d\tilde{u}_1\frac{dx}{|x|^{\gamma}}\notag\\
=&\  T^{(\gamma-1)/2\alpha}\int_{\mathbb{R}}\int_0^{s_1}\int_0^{s_2}\int_0^{s_3}\int_{\mathbb{R}}\tilde{u}_1^{-\gamma/\alpha}(\tilde{u}_2 - \tilde{u}_1)^{-1/\alpha}(\tilde{u}_3 - \tilde{u}_2)^{-1/\alpha}\notag\\
&\times\int_{\mathbb{R}}p^{\alpha}_{1}(x'-(T\tilde{u}_1)^{-1/\alpha}w)\varphi(w)\int_{\mathbb{R}}p^{\alpha}_{1}((T(\tilde{u}_2 - \tilde{u}_1))^{-1/\alpha}(w-y))\varphi(y)\notag\\
&\times\int_{\mathbb{R}}p^{\alpha}_{1}((T(\tilde{u}_3 - \tilde{u}_2))^{-1/\alpha}(y-z))\varphi(z)dzdydwd\tilde{u}_3d\tilde{u}_2d\tilde{u}_1\frac{dx'}{|x'|^{\gamma}},
\end{align}
then from (\ref{eq30}) and (\ref{eq31}),
\begin{align}
\label{e33}
&\frac{1}{F_T^3}\int_{\mathbb{R}\times\mathbb{R}}E_{(x_1, x_2)}\biggl(\sum_{j=1}^k d_j\langle L^T_{s_j, t_j},\varphi\otimes \psi\rangle\biggr)^3\mu_1(dx_1)\mu_2(dx_2)\notag\\
&=\prod_{i=1}^2T^{(\gamma_i-1)/2\alpha_i}\int_{\mathbb{R}\times\mathbb{R}}A_T(x_1,x_2)\mu_1(dx_1)\mu_2(dx_2),
\end{align}
where 
\begin{align}
 \label{eq53}
&A_T(x_1,x_2)=\int_0^{s_1}\int_0^{s_2}\int_0^{s_3}\tilde{u}_1^{-\gamma_1/\alpha_1}(\tilde{u}_2 - \tilde{u}_1)^{-1/\alpha_1}(\tilde{u}_3 - \tilde{u}_2)^{-1/\alpha_1}\notag\\
&\times\int_{\mathbb{R}}p^{\alpha_1}_{1}(x_1-(T\tilde{u}_1)^{-1/\alpha_1}w)\varphi(w)\int_{\mathbb{R}}p^{\alpha_1}_{1}((T(\tilde{u}_2 - \tilde{u}_1))^{-1/\alpha_1}(w-y))\varphi(y)\notag\\
&\times\int_{\mathbb{R}}p^{\alpha_1}_{1}((T(\tilde{u}_3 - \tilde{u}_2))^{-1/\alpha_1}(y-z))\varphi(z)dzdydwdu_3du_2du_1\notag\\
&\times\int_0^{t_1}\int_0^{t_2}\int_0^{t_3}\int_{\mathbb{R}}\tilde{v}_1^{-\gamma_2/\alpha_2}(\tilde{v}_2 - \tilde{v}_1)^{-1/\alpha_2}(\tilde{v}_3 - \tilde{v}_2)^{-1/\alpha_2}\notag\\
&\times\int_{\mathbb{R}}p^{\alpha_2}_{1}(x_2-(T\tilde{v}_1)^{-1/\alpha_2}w)\psi(w)\int_{\mathbb{R}}p^{\alpha_2}_{1}((T(\tilde{v}_2 - \tilde{v}_1))^{-1/\alpha_2}(w-y))\psi(y)\notag\\
&\times\int_{\mathbb{R}}p^{\alpha_2}_{1}((T(\tilde{v}_3 - \tilde{v}_2))^{-1/\alpha_2}(y-z))\psi(z)dzdydwdv_3dv_2dv_1.
 \end{align}
 From (\ref{eq53}) we obtain
\begin{align}
 \label{eq54}
&\lim_{T\to\infty}\int_{\mathbb{R}\times\mathbb{R}}A_T(x_1,x_2)\mu_1(dx_1)\mu_2(dx_2)\notag\\
&=\int_0^{s_1}\int_0^{s_2}\int_0^{s_3}\tilde{u}_1^{-\gamma_1/\alpha_1}(\tilde{u}_2 - \tilde{u}_1)^{-1/\alpha_1}(\tilde{u}_3 - \tilde{u}_2)^{-1/\alpha_1}du_3du_2du_1\notag\\
&\times\int_0^{t_1}\int_0^{t_2}\int_0^{t_3}\tilde{v}_1^{-\gamma_2/\alpha_2}(\tilde{v}_2 - \tilde{v}_1)^{-1/\alpha_2}(\tilde{v}_3 - \tilde{v}_2)^{-1/\alpha_2}dv_3dv_2dv_1\notag\\
&\times\left(\int_{\mathbb{R}}\varphi(x)dx\right)^3\left(\int_{\mathbb{R}}\psi(x)dx\right)^3\prod_{i=1}^2(p_1^{\alpha_i}(0))^2\int_{\mathbb{R}}\frac{p_1^{\alpha_i}(x)}{|x|^{\gamma_i}}dx.
 \end{align}

Then, from (\ref{eqdelta}), (\ref{e33}) and (\ref{eq54}) we get
\begin{align}
\label{e36}
&\lim_{T\to \infty}\int_{\mathbb{R}\times\mathbb{R}} \delta_{(x_1, x_2)}^T\mu_1(dx_1)\mu_2(dx_2)=0. 
\end{align}

Finally, putting (\ref{eq4}), (\ref{e35}) and (\ref{e36}) together we obtain
\begin{align*}
&\lim_{T\to \infty}C_T(\theta) =\exp\biggl\{-\frac{\theta^2}{2}D^2\sum_{j=1}^k \sum_{j'=1}^k d_jd_{j'}C^{(1)}(s_j,s_{j'})C^{(2)}(t_j,t_{j'})\biggr\},
\end{align*}
and convergence of finite-dimensional distributions of $X_T$ to finite-dimensional distributions of weighted fractional Brownian sheet  $DW$ has been proved.
\end{proof}

\begin{theorem}
\label{teoconhbf}
 Under the hypotheses of Theorem \ref{teo1}, if $\gamma_1=\gamma_2=0$, then $X_T$ converges in law to $DW$ in the space of continuous functions $C([0,\tau]\times[0,\tau], \mathbb{R})$ for any $\tau>0$ as $T\to \infty$, where $W$ is  fractional Brownian sheet with parameters $(1-\frac{1}{2\alpha_1}, 1-\frac{1}{2\alpha_2})$, and
$$D=\int_{\mathbb{R}}\varphi(x)dx\int_{\mathbb{R}}\psi(x)dx\biggl[\prod_{i=1}^2\frac{1}{1-1/\alpha_i}p_i^{\alpha_i}(0)\biggr]^{1/2}.$$
\end{theorem}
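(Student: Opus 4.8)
The plan is to combine the convergence of the finite‑dimensional distributions of $X_T$ to those of $DW$, already established in Theorem \ref{teo1} for $\gamma_1=\gamma_2=0$, with tightness of the family $\{X_T\}_{T\ge1}$ in $C([0,\tau]^2,\mathbb{R})$. When $\gamma_i=0$ the function $C^{(i)}$ of (\ref{equ2}) is the covariance of a fractional Brownian motion with Hurst index $\frac12(1+b_i)=1-\frac1{2\alpha_i}$, so $W$ is the fractional Brownian sheet stated in the theorem, and the constant $D$ of (\ref{defD}) reduces to the one displayed here because $\int_{\mathbb{R}}p_1^{\alpha_i}(x)\,dx=1$; hence only tightness remains. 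Since $\langle L^T_{0,t},\varphi\otimes\psi\rangle=\langle L^T_{s,0},\varphi\otimes\psi\rangle=0$, the random field $X_T$ vanishes on the axes, and by the Kolmogorov--Chentsov-type tightness criterion for two‑parameter processes (the version of Lemma \ref{lemacontinuidad} with constants uniform in the parameter; cf.\ \cite{ayache}, \cite{feyel}) it is enough to find $q_1,q_2>1$ and a finite constant $M$, independent of $T$, with
\begin{equation}
\label{eqtight}
E\bigl(|\Delta_{s,t}X_T(s',t')|^{4}\bigr)\le M\,(s'-s)^{q_1}(t'-t)^{q_2}\qquad\text{for all }0\le s\le s'\le\tau,\ 0\le t\le t'\le\tau,\ T\ge1.
\end{equation}

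To obtain (\ref{eqtight}) I would estimate the cumulants of $\Delta_{s,t}X_T(s',t')$. Writing the rectangular increment of the occupation time (\ref{eqdefL}) as $\int_{Ts}^{Ts'}\int_{Tt}^{Tt'}\langle N_{u,v},\varphi\otimes\psi\rangle\,dv\,du$ and using that $N_{0,0}$ is a Poisson random measure, $\Delta_{s,t}X_T(s',t')$ is, up to its deterministic mean, a Poisson integral, hence infinitely divisible with cumulants
\begin{equation*}
\kappa_n\bigl(\Delta_{s,t}X_T(s',t')\bigr)=\frac1{F_T^{\,n}}\int_{\mathbb{R}\times\mathbb{R}}E_{(x_1,x_2)}\Bigl(\Bigl(\int_{Ts}^{Ts'}\!\!\int_{Tt}^{Tt'}\varphi(\xi^{x_1}_u)\psi(\zeta^{x_2}_v)\,dv\,du\Bigr)^{n}\Bigr)\,\mu_1(dx_1)\mu_2(dx_2),\quad n\ge2
\end{equation*}
(and $\kappa_1=0$), the interchange being legitimate because, by the bound below applied to $|\varphi|,|\psi|$, the right‑hand side is finite for every $n$. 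Expanding the $n$‑th power, using independence of $\xi$ and $\zeta$, the Markov property, and the mass‑preserving identity $\int_{\mathbb{R}}\mathcal{T}^{\alpha_i}_t g=\int_{\mathbb{R}}g$ (carrying out the $x_1$‑integration first, which is permissible since $\gamma_1=0$), the $x_1$‑factor becomes $\int_{\mathbb{R}^n}\prod_{l=1}^n\varphi(y_l)\prod_{l=1}^{n-1}p^{\alpha_1}_{\tilde u_{l+1}-\tilde u_l}(y_{l+1}-y_l)\,dy$, with $\tilde u_1\le\cdots\le\tilde u_n$ the variables $u_1,\dots,u_n$ in increasing order; bounding each transition density by $\|p^{\alpha_1}_t\|_\infty=t^{-1/\alpha_1}p^{\alpha_1}_1(0)$ and integrating the $\varphi$'s one at a time bounds its absolute value by $\|\varphi\|_1^{\,n}\,(p^{\alpha_1}_1(0))^{n-1}\prod_{l=1}^{n-1}(\tilde u_{l+1}-\tilde u_l)^{-1/\alpha_1}$. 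The remaining time integral over $[Ts,Ts']^n$ depends only on the gaps, so by translation and the scaling $u\mapsto(s'-s)u$ it equals $(T(s'-s))^{\,n-(n-1)/\alpha_1}I_{n,\alpha_1}$, where $I_{n,\alpha_1}=\int_{[0,1]^n}\prod_{l=1}^{n-1}(\tilde w_{l+1}-\tilde w_l)^{-1/\alpha_1}\,dw$ is finite precisely because $1/\alpha_1<1$. Treating the second coordinate the same way and recalling $F^{(i)}_T=T^{1-1/(2\alpha_i)}$ one arrives at
\begin{equation*}
\bigl|\kappa_n\bigl(\Delta_{s,t}X_T(s',t')\bigr)\bigr|\le C_n\,T^{(2-n)/(2\alpha_1)+(2-n)/(2\alpha_2)}\,(s'-s)^{\,n-(n-1)/\alpha_1}(t'-t)^{\,n-(n-1)/\alpha_2}.
\end{equation*}
For $n=2$ the exponent of $T$ vanishes, for $n\ge3$ it is negative so the factor is $\le1$ once $T\ge1$; and for every $n\ge2$ the exponents $n-(n-1)/\alpha_i$ strictly exceed $n-(n-1)=1$ because $\alpha_i>1$.

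To conclude I would use the moment--cumulant expansion for a centered (infinitely divisible) variable, $E\bigl((\Delta_{s,t}X_T(s',t'))^4\bigr)=\kappa_4+3\kappa_2^2$; inserting the previous bounds for $n=2$ and $n=4$, with $T\ge1$, and using $s'-s\le\tau$, $t'-t\le\tau$ to absorb the larger exponent $4-2/\alpha_i$ coming from $\kappa_2^2$ into the smaller exponent $4-3/\alpha_i$ coming from $\kappa_4$, yields (\ref{eqtight}) with $q_i=4-3/\alpha_i>1$ and $M$ depending only on $\tau,\alpha_1,\alpha_2,\varphi,\psi$. This gives tightness of $\{X_T\}_{T\ge1}$ in $C([0,\tau]^2,\mathbb{R})$, and together with the convergence of finite‑dimensional distributions from Theorem \ref{teo1} it proves that $X_T$ converges in law to $DW$ in $C([0,\tau]^2,\mathbb{R})$. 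The step I expect to be the main obstacle is establishing the cumulant bound \emph{uniformly in $T$}: it rests on the exact self‑similar rescaling $p^{\alpha}_t(x)=t^{-1/\alpha}p^{\alpha}_1(t^{-1/\alpha}x)$, on the elementary fact that $p^{\alpha}_1$ attains its maximum at the origin (so that the rescaled densities can be controlled by $p^{\alpha}_1(0)$), and on the finiteness of the iterated Beta‑type integrals $I_{n,\alpha_i}$, which is exactly where the assumption $\alpha_i>1$ is used; the negativity of the $T$‑exponent for $n\ge3$ is what makes the higher cumulants disappear in the limit, consistently with the limiting field being Gaussian.
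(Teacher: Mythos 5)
Your proposal is correct, and its overall architecture (finite-dimensional convergence from Theorem \ref{teo1} plus a moment bound for tightness of a family vanishing on the axes) matches the paper's; but the tightness step itself is carried out by a genuinely different route. The paper stays at the second moment: it observes that $E\bigl((\Delta_{s_1,t_1}X_T(s_2,t_2))^2\bigr)$ factorizes \emph{exactly} into the product of the second moments of increments of two independent one-parameter occupation-time fluctuation processes $X_T^{(1)},X_T^{(2)}$, quotes the bound $E\bigl(\langle X_T^{(i)}(t)-X_T^{(i)}(s),\varphi\rangle^2\bigr)\le C|t-s|^{2-1/\alpha_i}$ from \cite{bgt2006}, and applies the Bickel--Wichura criterion with $m=2$ (noting $2-1/\alpha_i>1$). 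You instead go to the fourth moment, computing all cumulants of the Poisson functional from scratch: your identity $\kappa_n=F_T^{-n}\int E_{(x_1,x_2)}(\cdots^n)\,d\mu_1 d\mu_2$ is the correct cumulant formula for a marked Poisson system (it is the same structure as the paper's characteristic-function expansion (\ref{eq3})), your exponent bookkeeping $\frac{2-n}{2\alpha_i}$ in $T$ and $n-(n-1)/\alpha_i$ in the increment lengths checks out, the relation $\mu_4=\kappa_4+3\kappa_2^2$ is valid since $\kappa_1=0$, and $q_i=4-3/\alpha_i>1$ indeed follows from $\alpha_i>1$. What the paper's route buys is brevity and reuse of the one-parameter literature; what yours buys is self-containedness (no appeal to \cite{bgt2006}), a transparent explanation of why the limit is Gaussian (the higher cumulants carry negative powers of $T$), and an explicit display of where $\gamma_1=\gamma_2=0$ is used --- the mass-preservation step $\int_{\mathbb{R}}\mathcal{T}^{\alpha_i}_t g=\int_{\mathbb{R}}g$ against Lebesgue intensity --- which is precisely the obstruction the paper's Remark 3.3(2) points to for the inhomogeneous case. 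The only places where you lean on unproved but standard facts are the uniform-in-$T$ Kolmogorov--Chentsov tightness criterion for two-parameter continuous processes (a legitimate substitute for the paper's citation of \cite{bickel}) and the unimodality bound $p_1^{\alpha}(x)\le p_1^{\alpha}(0)$, which is immediate from the Fourier representation; neither is a gap.
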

\begin{proof}
By Theorem \ref{teo1} we have convergence of finite-dimensional distributions of $X_T	$ to those of $DW$. It remains to show that the family $\left\{X_T\right\}$ is tight. Since these processes are null on the axes, by the Bickel-Wichura theorem \cite{bickel} we only need  prove that there exist even $m\geq 2$ and positive constants  $C_m$, $\delta_1, \delta_2$ such that $m\delta_1, m\delta_2 > 1$ and
\begin{equation}
\label{eqtirantez}
\sup_T E\left(\left(\Delta_{s_1,t_1}X_T(s_2,t_2)\right)^m \right)\leq C_m(s_2 - s_1)^{m\delta_1}(t_2 - t_1)^{m\delta_2}, \ \  \text{for all} \ s_1<s_2, t_1<t_2.
\end{equation} 

From (\ref{eqdefXT}), 
\begin{align*}
\Delta_{s_1,t_1}X_T(s_2,t_2)=& \frac{1}{F_T}\int_{Ts_1}^{Ts_2} \int_{Tt_1}^{Tt_2}\left(\left\langle N_{u,v},\varphi\otimes \psi \right\rangle - E(\left\langle N_{u,v},\varphi\otimes \psi \right\rangle)\right)dudv,
\end{align*}
then, by (\ref{defcov}), 
\begin{align}
\label{eq310}
&E(\left(\Delta_{s_1,t_1}X_T(s_2,t_2)\right)^2) \notag \\
&={\frac{1}{F_T^2} \int_{Ts_1}^{Ts_2} \int_{Tt_1}^{Tt_2}\int_{Ts_1}^{Ts_2} \int_{Tt_1}^{Tt_2}}Cov\left(\left\langle N_{u_1,v_1},\varphi\otimes \psi \right\rangle,\left\langle N_{u_2,v_2},\varphi\otimes \psi \right\rangle\right)dv_2du_2dv_1du_1\notag\\
&=\frac{1}{F_T^2} \int_{Ts_1}^{Ts_2} \int_{Tt_1}^{Tt_2}\int_{Ts_1}^{Ts_2} \int_{Tt_1}^{Tt_2}\int_{\mathbb{R}}\mathcal{T}^{\alpha_1}_{u_1 \wedge u_2}(\varphi \mathcal{T}^{\alpha_1}_{\left|u_1 - u_2\right|}\varphi)(x_1)dx_1\notag\\
&\hspace{4cm}\times \int_{\mathbb{R}}\mathcal{T}^{\alpha_2}_{v_1 \wedge v_2}(\varphi \mathcal{T}^{\alpha_2}_{\left|v_1 - v_2\right|}\varphi)(x_2)dx_2dv_2du_2dv_1du_1\notag\\
&=\frac{1}{T^{2-(1+\gamma_1)/\alpha_1}}\int_{Ts_1}^{Ts_2}\int_{Ts_1}^{Ts_2}\int_{\mathbb{R}}\mathcal{T}^{\alpha_1}_{u_1 \wedge u_2}(\varphi \mathcal{T}^{\alpha_1}_{\left|u_1 - u_2\right|}\varphi)(x_1)dx_1du_2du_1\notag\\
&\times  \frac{1}{T^{2-(1+\gamma_2)/\alpha_2}}\int_{Tt_1}^{Tt_2}\int_{Tt_1}^{Tt_2}\int_{\mathbb{R}}\mathcal{T}^{\alpha_2}_{v_1 \wedge v_2}(\varphi \mathcal{T}^{\alpha_2}_{\left|v_1 - v_2\right|}\varphi)(x_2)dx_2dv_2dv_1\notag\\
&=E(\langle X_T^{(1)}(s_2)-X_T^{(1)}(s_1), \varphi\rangle^2)E(\langle X_T^{(2)}(t_2)-X_T^{(2)}(t_1), \psi\rangle^2),
\end{align}
where $X_T^{(i)}, i=1,2,$ are occupation time fluctuation processes of independent systems of particles moving in $\mathbb{R}$ according to symmetric $\alpha_i$-stable processes with initial configurations given by  Poisson random measures on $\mathbb{R}$ with intensities $\mu_i$, i.e.,
$$X_T^{(1)}(t)=\frac{1}{T^{1-1/2\alpha_1}}\int_{0}^{Tt}(\langle N_u^{(1)}, \varphi\rangle-E(\langle N_u^{(1)}, \varphi\rangle))du, \ \ \ \  N_u^{(1)}= \sum_{x\in \text{Pois}(\mu_1)} \delta_{\xi_u^{x}},$$
and
$$X_T^{(2)}(t)=\frac{1}{T^{1-1/2\alpha_2}}\int_{0}^{Tt}(\langle N_u^{(2)}, \varphi\rangle-E(\langle N_u^{(2)}, \varphi\rangle))du, \ \ \ \  N_u^{(2)}= \sum_{x\in \text{Pois}(\mu_2)} \delta_{\zeta_u^{x}}.$$
In \cite{bgt2006} such a one-time system is studied and it is shown that 
\begin{align}
\label{eq311}
E(\langle X_T^{(i)}(t)-X_T^{(i)}(s), \varphi\rangle^2)\leq C|t-s|^h,
\end{align}
where $C$ is a positive constant (not depending on $T$) and  $h=2-\frac{1}{\alpha_i}>1$. From (\ref{eq310}) and (\ref{eq311}) we obtain (\ref{eqtirantez}).  
\end{proof}

\begin{remark}
\begin{enumerate}
\item Theorem \ref{teoconhbf} gives a functional approximation  of fractional Brownian sheet with parameters $(h_1, h_2)\in (1/2, 3/4]^2$, taking   $h_i=1-\frac{1}{2\alpha_i}$, $i=1,2.$ 
	
	\item Proving tightness with $\gamma_1\neq 0$ or $\gamma_2\neq 0$ is considerably more difficult because it requires computing moments of arbitrarily high order (see \cite{bgt2007} for the one time case), and this involves moments of arbitrarily high order of the Poisson random measure $\text{Pois}(\mu_1\otimes\mu_2)$, which are cumbersome.
	
	\item In Theorem \ref{teo1} we may also consider the measures $\mu_i$ of the form (\ref{eqmu}) with $\gamma_i<0$, assuming that $|\gamma_i|<\alpha_i$ if $\alpha_i<2$ (which implies that the mean is finite), and the result in the theorem holds. 
	
	\item The role of the functions $\varphi, \psi$ is only subsidiary since they are fixed, and in the occupation time fluctuation limit they appear only in the constant $D$ given by (\ref{defD}). If $\varphi, \psi$ are taken as variables in the space $\mathcal{S}(\mathbb{R})$ of smooth rapidly decreasing functions, then in principle it is possible to prove convergence of the occupation time fluctuations as $(\mathcal{S}'(\mathbb{R}))^2$-valued processes, where $\mathcal{S}'(\mathbb{R})$ is the space of tempered distributions (topological dual of $\mathcal{S}(\mathbb{R})$), the limit being the space-time random field
	$$\left(Z_{s,t}\right)_{s,t \geq 0} = K(\lambda \otimes \lambda)\left(W_{s,t}\right)_{s,t \geq 0},$$
	where $W$ is the weighted fractional Brownian sheet in Theorem \ref{teo1},
	$$K=\left(\prod_{i=1}^2\frac{1}{1-1/\alpha_i}p_1^{\alpha_i}(0)\int_{\mathbb{R}}\frac{p_1^{\alpha_i}(x)}{|x|^{\gamma_i}}dx\right)^{1/2},$$
	and $\lambda$ is the Lebesgue measure on $\mathbb{R}$. (See \cite{bgt2007} for such a setup for a one-time particle system.)
\end{enumerate}
\end{remark}

\bibliographystyle{amsplain}

\end{document}